\documentclass[12pt,reqno,openbib,runningheads,a4paper]{amsart}%
\usepackage{eurosym}
\usepackage{graphicx}
\usepackage{amssymb}
\usepackage{amsfonts}
\usepackage{amsmath}
\usepackage{graphicx}
\usepackage{epstopdf}
\usepackage[authoryear]{natbib}
\usepackage[legalpaper,bookmarks=true,colorlinks=true,linkcolor=blue,citecolor=blue]{hyperref}

\setcounter{MaxMatrixCols}{30}
%TCIDATA{OutputFilter=latex2.dll}
%TCIDATA{Version=5.50.0.2953}
%TCIDATA{LastRevised=Friday, December 04, 2015 21:06:11}
%TCIDATA{<META NAME="GraphicsSave" CONTENT="32">}
%TCIDATA{<META NAME="SaveForMode" CONTENT="1">}
%TCIDATA{BibliographyScheme=Manual}
%BeginMSIPreambleData
\providecommand{\U}[1]{\protect\rule{.1in}{.1in}}
%EndMSIPreambleData
\providecommand{\U}[1]{\protect\rule{.1in}{.1in}}
\textheight 230mm
\topmargin  0mm
\textwidth 165mm
\oddsidemargin 0mm
\evensidemargin 0mm

\newtheorem{theorem}{Theorem}[section]

\newtheorem{proposition}{Proposition}[section]

\makeatletter
\renewcommand{\@biblabel}[1]{}
\makeatother
\begin{document}
	
	\begin{center}
		{\Large \textbf{Robust estimator of distortion risk premiums for
				heavy-tailed losses}}\medskip \medskip
		
		Brahim Brahimi{\footnote{{\texttt{brah.brahim@gmail.com}}} and }Zoubir
		Kenioua\medskip
	\end{center}
	
	{\small \textit{Laboratory of Applied Mathematics, Mohamed Khider
			University, Biskra, Algeria}} \medskip
	
	\noindent\textbf{Abstract}\medskip
	
	\noindent We use the so-called t-Hill tail index estimator proposed by \cite%
	{Fab01}, rather than Hill's one, to derive a robust estimator for the
	distortion risk premium of loss. Under the second-order condition of regular
	variation, we establish its asymptotic normality. By simulation study, we
	show that this new estimator is more robust than of \cite{NeMe09} both for
	small and large samples.\medskip
	
	\noindent \textbf{Keywords:} Distortion risk premiums; Extreme values; Tail;
	Robustness.\medskip
	
	\noindent \textbf{AMS 2010 Subject Classification:} 62G10; 62G32.
	
	\section{\textbf{Introduction\label{sec1}}}
	
	\noindent In many important applications in Finance, Actuarial Science,
	Hydrology, Insurance, one of most crucial topics is the determination of the
	amounts of losses of a heavy-tailed risks. In literature there are many
	possible definitions of risk according to the investment objectives, so in
	some sense risk itself is a subjective concept as well as the risk measure for an investor. Thus, the existence of a unique risk that solves the investor's
	problems is not confirmed. However, one must distinguish between risks. The
	concept of coherence due to the paper of \citep{ArDeEbHe99} which categorize
	risks by good or bad risks. For this reason and to improve the performance
	of investor's strategies we may identify those risk measures and the
	appearance of the risk himself heavy tails, asymmetries,... Most
	of this risk measures, used are special cases of Wang's distortion premium %
	\citep[][]{Wang96}, defined as follows 
	\begin{equation}
	\Pi \lbrack \psi ;F]=\int_{0}^{\infty }\psi (1-F(x))dx,  \label{eq1}
	\end{equation}%
	where $\psi :[0,1]\rightarrow \lbrack 0,1]$ is a non-decreasing function
	called distortion function, such that $\psi (0)=0$ and $\psi (1)=1.$ The
	distortion functions $\psi $ are concave, which makes the corresponding
	distortion premiums $\Pi \lbrack \psi ;F]$ coherent \citep{ArDeEbHe99} as
	proved by \cite{WiHa99}. In this paper, we suppose that the distortion
	functions $t\longmapsto \psi \left( t^{-1}\right) $ is regularly varying at
	infinity with index of regular variation $\rho \geq 1,$ such that%
	\begin{equation}
	\psi \left( t^{-1}\right) =t^{-1/\rho }\mathcal{L}_{\psi }\left( t\right) ,
	\label{g}
	\end{equation}%
	where $t\longmapsto \mathcal{L}_{\psi }\left( t\right) $ is slowly varying
	as infinity, that is $\mathcal{L}_{\psi }\left( tx\right) /\mathcal{L}_{\psi
	}\left( t\right) \rightarrow 1$ as $t\rightarrow \infty ,$ for any $x>0.$
	Note that a non negative random variable (rv) $X$ with finite mean and a
	cumulative distribution function (cdf) $F$ is called heavy-tailed, if $1-F$
	is regularly varying with index $-1/\gamma <0$ ( notation: $F\in \mathcal{RV}%
	_{\left( -1/\gamma \right) })$, that is%
	\begin{equation}
	\lim_{t\rightarrow \infty }\frac{1-F\left( tx\right) }{1-F\left( t\right) }%
	=x^{-1/\gamma },\text{ for }x>0.  \label{first-condition}
	\end{equation}%
	In particular, the proportional-hazards premium (see, \citealp{Roletal99}, page
	82). 
	\begin{equation}
	\Pi \lbrack \rho ;F]=\int_{0}^{\infty }\left( 1-F(x)\right) ^{1/\rho }dx,
	\label{Psi-ro}
	\end{equation}%
	with the concave distortion function $\psi (t)=t^{1/\rho }$ for every $\rho
	\geq 1.$ Since we are concerned with with heavy-tailed losses with infinite
	second moment, then by following \cite{BrMeNeRi}, we assume that $\gamma \in
	(1/2,1)$ and $\rho \gamma \in \left( 0,1\right) ,$ thus we will work with 
	\begin{equation}
	1/2<\gamma <1/\rho .  \label{condition gamabeta}
	\end{equation}%
	Suppose that we have an independent and identically distributed (iid) sample 
	$X_{1},...,X_{n}$ of rv $X$ of size $n$ with a cdf $F$ satisfying condition (%
	\ref{first-condition}) and let denote by $X_{1:n}\leq ...\leq X_{n:n}$ the
	corresponding order statistics. Also, let $1<k=k_{n}$ be the number of
	extreme observations used in the computation of the tail index. We assume
	that $k$ satisfies the conditions 
	\begin{equation}
	1<k<n,\text{ }k\rightarrow \infty \text{ and }k/n\rightarrow 0\text{ as }%
	n\rightarrow \infty .  \label{K}
	\end{equation}%
	\cite{NeMe09} proposed an alternative estimator of (\ref{Psi-ro}) and
	establish its asymptotic normality by using the Weissman's estimator of the
	high quantile $q_{t}=F^{\leftarrow }\left( 1-t\right) $ defined by 
	\begin{equation*}
	\widehat{q}_{t}=\left( k/n\right) ^{\widehat{\gamma }^{H}}X_{n-k:n}t^{-%
		\widehat{\gamma }^{H}},\text{ }t\downarrow 0,
	\end{equation*}%
	where $F^{\leftarrow }$\ denotes the generalized inverse of $F$ and%
	\begin{equation}
	\widehat{\gamma }^{H}=\widehat{\gamma }^{H}\left( k\right) :=\frac{1}{k}%
	\sum\limits_{i=1}^{k}\log X_{n-i+1:n}-\log X_{n-k:n},  \label{Hill}
	\end{equation}%
	is the well-known Hill estimator \citep[][]{Hill75} of the tail index $%
	\gamma .$ For a fixed aversion parameter $\rho ,$ their estimator is given by%
	\begin{equation}
	\widehat{\Pi }_{n}\left( \widehat{\gamma }^{H},k\right) :=\left( k/n\right)
	^{1/\rho }\frac{X_{n-k:n}}{1-\widehat{\gamma }^{H}\rho }+\sum_{i=k+1}^{n}%
	\left( \left( i/n\right) ^{1/\rho }-\left( \left( i-1\right) /n\right)
	^{1/\rho }\right) X_{n-i+1:n}.  \label{NeMe}
	\end{equation}%
	The Hill estimator is a pseudo-maximum likelihood estimator based on the
	exponential approximation of normalized log-spacings $Y_{j}=j\left( \log
	X_{j:n}-\log X_{j+1:n}\right) $ for $j=1,...,k.$ In practice, the Hill
	estimator depends on the choice of the sample fraction $k$ and is inherently
	not very robust to large values $Y_{j},$ which makes the estimator proposed
	by \cite{NeMe09} sensitive. This constitutes a serious problem in terms of
	bias and root mean squared error (RMSE). To improve the quality of $\widehat{%
		\Pi }_{n}\left( \widehat{\gamma }^{H},k\right) $, instead of Hill's one, we
	propose to estimate the tail index $\gamma $ by the so-called t-Hill
	estimator, proposed by \cite{Fab01}, given by its harmonic mean%
	\begin{equation}
	\widehat{\gamma }=\widehat{\gamma }\left( k\right) :=\left( \frac{1}{k}%
	\sum_{j=1}^{k}\frac{X_{n-k:n}}{X_{n-j+1:n}}\right) ^{-1}-1  \label{t-Hill}
	\end{equation}%
	known as score moment estimation (t-score or t-estimation method). The
	latter is more robust than the classical Hill estimator $\widehat{\gamma }%
	^{H}$ defined in (\ref{Hill}) (see \citealp{StFaS12} and the asymptotic
	normality is given in Theorem 2 of \citealp{BeScSt14}). For other robust
	estimators for $\gamma $ we referred to \cite{PeWe01}, \cite{JuSc04}, \cite%
	{VaBeChHu07} and \cite{KiLe08}. The rest of the paper is organized as
	follows, in Section \ref{sec2} we present a construction of a robust
	estimator of $\Pi \lbrack \psi ;F]$ in the case of heavy-tailed losses. In
	Section \ref{sec3} we establish its asymptotic normality. In Section \ref%
	{sec4} we carry out a simulation study to illustrate empirical performance
	and robustness of the estimator. Concluding notes are given in Section \ref%
	{sec5}. Proofs are gathered in Section \ref{sec6}.\medskip 
	
	\noindent Throughout the paper, we use the standard notation $\overset{P}{%
		\rightarrow }$ for the convergence in probability and $\mathcal{N}\left( \mu
	,\sigma \right) $ to denote a normal rv with mean $\mu $ and variance $%
	\sigma .$
	
	\section{\textbf{Defining the estimator\label{sec2}}}
	
	\noindent By using the generalized inverse $F^{\leftarrow }$ and for a fixed
	distortion function $\psi ,$ we may rewrite (\ref{eq1}) into 
	\begin{equation}
	\Pi _{\psi }[X]:=-\int_{0}^{1}\psi \left( s\right) dF^{\leftarrow }\left(
	1-s\right) .  \label{DRMQ}
	\end{equation}%
	The empirical estimator of the risk premium $\Pi _{\psi }[X]$ is obtained by
	substituting $F^{\leftarrow }$ on the right-hand side of equation (\ref{DRMQ}%
	) by its empirical counterpart $F_{n}^{\leftarrow }{\left( s\right) :=\inf
		\{x\in \mathbb{R}:}$ ${F}_{n}{\left( x\right) \geq s\},}$ $0<s\leq 1,$
	associated to the empirical cdf defined on the real line, defined by $%
	{\normalsize F_{n}\left( x\right) :=n}^{-1}\#\left\{ X_{i}\leq x,1\leq i\leq
	n\right\} $ where $\#A$ denote the cardinality of a set $A.$\ After
	straightforward computations, we obtain the formula 
	\begin{equation*}
	\Pi _{n}[X]:=\int_{0}^{1}F_{n}^{\leftarrow }\left( 1-s\right) d\psi \left(
	s\right)
	\end{equation*}%
	which may be rewritten, in terms of $X_{1:n},...,X_{n:n},$ as an $L$%
	-statistic 
	\begin{equation}
	\Pi _{n}[X]=\sum_{i=1}^{n}c_{i,n}\left( \psi \right) X_{n-i+1:n},
	\label{psin}
	\end{equation}%
	where 
	\begin{equation}
	c_{i,n}\left( \psi \right) \equiv \psi \left( i/n\right) -\psi \left( \left(
	i-1\right) /n\right) .  \label{ai}
	\end{equation}%
	The form (\ref{psin}) is a linear combinations of the order statistics %
	\citep[see,][page 260]{SW86}. The limit behavior was discussed by many
	authors: \cite{Chall67}, \cite{Stigler74}, \cite{Mason81}, \cite{JoZi03}
	(see its Theorem 3.2 in the case that $X$ is not heavy-tailed) and in \cite%
	{BrMeNeRi} (in heavy-tailed case).
	
	\subsection{\textbf{Heavy-tailed losses case}}
	
	\noindent Let $X$\ be a non-negative rv with cdf $F\in \mathcal{RV}_{\left(
		-1/\gamma \right) }.$ The condition (\ref{first-condition}) is equivalent to%
	\begin{equation}
	\lim_{t\rightarrow 0}\frac{F^{\leftarrow }\left( 1-tx\right) }{F^{\leftarrow
		}\left( 1-t\right) }=x^{-\gamma },\text{ for every }x>0.
	\label{First-Cond-Qua}
	\end{equation}%
	We say that the function $s\rightarrow F^{\leftarrow }\left( 1-s\right) $
	satisfying condition (\ref{First-Cond-Qua}) is regularly varying at zero
	with the index $(-\gamma )<0.$ The parameter $\gamma $ is called the tail
	index or extreme value index (EVI). A various tail index estimators have
	been suggested in the literature, based for instance of the conventional
	maximum likelihood method, moment estimation, ... (see, e.g. \citealp{Hill75}%
	, \citealp{Pick75}, \citealp{Dekeretal89}. \citealp{Csorgoetal85} and %
	\citealp{Drees95}). For the robustness and bias reduction (see, i.e. %
	\citealp{PenQui04} and \citealp{StFaS12}). The regular-variation condition
	itself is not sufficient for establishing asymptotic distributions. To this
	end, we suppose that cdf $F$ satisfy the well-known by the second-order
	condition of regular variation with second-order parameter $\tau \leq 0$,
	that is: there exists a function $t\rightarrow a(t)$ with constant sign at
	infinity and converges to $0$ as $t\rightarrow \infty $ such that 
	\begin{equation}
	\underset{t\rightarrow \infty }{\lim }\dfrac{\overline{F}\left( tx\right) /%
		\overline{F}\left( t\right) -x^{-1/\gamma }}{a\left( t\right) }=x^{-1/\gamma
	}\dfrac{x^{\tau /\gamma }-1}{\gamma \tau },  \label{second-order}
	\end{equation}%
	for every $x>0.$ When $\tau =0,$ then the ratio $\dfrac{x^{\tau /\gamma }-1}{%
		\gamma \tau }$ should be interpreted as $\log x.$ In terms of the quantile
	function $F^{\leftarrow },$ condition (\ref{second-order}) is equivalent to
	the following one 
	\begin{equation}
	\lim_{t\rightarrow 0}\frac{\dfrac{F^{\leftarrow }\left( 1-tx\right) }{%
			F^{\leftarrow }\left( 1-t\right) }-x^{-\gamma }}{A\left( t\right) }%
	=x^{-\gamma }\frac{x^{\tau }-1}{\tau },  \label{second-cond}
	\end{equation}%
	for every $x>0,$ where $A\left( t\right) :=\gamma ^{2}a\left( F^{\leftarrow
	}\left( 1-t\right) \right) ,$ (see \citeauthor{deHS96}, \citeyear{deHS96} or
	Theorem 3.2.9 in \citeauthor{deHF06}, \citeyear[page 48]{deHF06}). The
	Weissman estimator \citep[][]{Weis78} of high quantiles $F^{\leftarrow }$ is
	given by 
	\begin{equation}
	F_{n}^{\leftarrow \left( W\right) }(1-s):=(k/n)^{\widehat{\gamma }%
	}X_{n-k:n}s^{-\widehat{\gamma }},\text{ }s\downarrow 0.  \label{wies}
	\end{equation}%
	\medskip The formula (\ref{DRMQ}) can be split into%
	\begin{equation}
	\Pi _{\psi }[X]=-\int_{0}^{k/n}\psi \left( s\right) dF^{\leftarrow
	}(1-s)-\int_{k/n}^{1}\psi \left( s\right) dF^{\leftarrow }(1-s).  \label{pi}
	\end{equation}%
	By using an integration by part to the second integral yields%
	\begin{eqnarray*}
		\Pi _{\psi } &=&\psi \left( k/n\right) F^{\leftarrow
		}(1-k/n)-\int_{0}^{k/n}\psi \left( s\right) dF^{\leftarrow
	}(1-s)+\int_{k/n}^{1}F^{\leftarrow }(1-s)d\psi \left( s\right)  \\
	&:&=\Pi _{\psi }^{\left( 1\right) }+\Pi _{\psi }^{\left( 2\right) }+\Pi
	_{\psi }^{\left( 3\right) }.
\end{eqnarray*}%
A simple estimator of $\Pi _{\psi }^{\left( 1\right) }$ is 
\begin{equation}
\Pi _{\psi ,n}^{\left( 1\right) }:=\psi \left( k/n\right) X_{n-k,n}.
\label{pi1}
\end{equation}%
To estimate $\Pi _{\psi }^{\left( 2\right) }$, we note that $\widehat{\gamma 
}$ is a consistent estimator for $\gamma $ \citep[][]{StFaS12} and since $%
\rho <1/\gamma $, by substituting $F_{n}^{\leftarrow \left( W\right) }(1-s)$
given in (\ref{wies}) instead of $F^{\leftarrow }(1-s)$ and integrating
yield the following estimator%
\begin{equation}
\Pi _{\psi ,n}^{\left( 2\right) }:=\widehat{\gamma }\left( k/n\right) ^{%
	\widehat{\gamma }}X_{n-k:n}\int_{0}^{k/n}s^{-\widehat{\gamma }-1}\psi \left(
s\right) ds.  \label{pi2}
\end{equation}%
Finally, by plugging $F_{n}^{\leftarrow }$ instead of $F^{\leftarrow }$ on
second integral of Equation (\ref{pi}) we obtain the estimator 
\begin{equation}
\Pi _{\psi ,n}^{\left( 3\right) }:=\sum_{i=k+1}^{n}c_{i,n}\left( \psi
\right) X_{n-i+1:n},  \label{pi3}
\end{equation}%
of $\Pi _{\psi }^{\left( 3\right) },$ where $F_{n}^{\leftarrow }{\normalsize %
	\left( s\right) :=\inf \left\{ x\in \mathbb{R}:F_{n}\left( x\right) \geq
	s\right\} ,\;0<s\leq 1,}$ denote the sample quantile function associated to
the empirical cdf defined on the real line by ${\normalsize F_{n}\left(
	x\right) :=n}^{-1}\sum\nolimits_{i=1}^{n}\mathbb{I}\left( X_{i}\leq x\right)
,${\normalsize \ }with\ $\mathbb{I}\left( \cdot \right) $\ being\ the\
indicator\ function and the coefficients $c_{i,n}\left( \psi \right) $ are
given in (\ref{ai}). The final form of our estimator%
\begin{equation*}
\widetilde{\Pi }_{\psi ,n}:=X_{n-k,n}\left( \psi \left( k/n\right) +\widehat{%
	\gamma }\left( k/n\right) ^{\widehat{\gamma }}\int_{0}^{k/n}s^{-\widehat{%
		\gamma }-1}\psi \left( s\right) ds\right) +\sum_{i=k+1}^{n}c_{i,n}\left(
\psi \right) X_{n-i+1:n}.
\end{equation*}%
To establish the asymptotic normality of our estimator and compared with the
estimator proposed by \citeauthor{NeMe09} given in Equation (\ref{NeMe}), we
use the same function $\psi \left( t\right) =t^{1/\rho },$ for $\rho \geq 1$
used in \cite{NeMe09}. In this case our estimator have the following form%
\begin{equation}
\widetilde{\Pi }_{n}\left( \widehat{\gamma },k\right) :=\left( k/n\right)
^{1/\rho }\frac{X_{n-k,n}}{1-\widehat{\gamma }\rho }+\sum_{i=k+1}^{n}\left(
\left( i/n\right) ^{1/\rho }-\left( \left( i-1\right) /n\right) ^{1/\rho
}\right) X_{n-i+1:n}.  \label{Br}
\end{equation}

\section{\textbf{Asymptotic distribution\label{sec3}}}

\noindent We will begin to expose our results as asymptotic representations
theorems in the lines of \cite{BeScSt14}. For that purpose, we need to
describe the probability theory on which they hold. Indeed, we use the
so-called Hungarian construction of \cite{CsCsHM86}. For this we define $%
\left\{ U_{n}(s),0\leq s\leq 1\right\} $, the uniform empirical distribution
function and we consider the order statistics $U_{1:n}\leq ...\leq U_{n:n}$
pertaining to the independent standard uniform rv's $U_{1},U_{2},...,$ we
introduce the uniform empirical quantile function $\left\{ V_{n}(s),0\leq
s\leq 1\right\} $ based on the $n\geq 1$ first observations of $%
U_{1},U_{2},...,$ on $(0,1)$ such that%
\begin{equation*}
V_{n}\left( s\right) =U_{i,n}\quad \text{for}\ \left( i-1\right) /n<s\leq
i/n,\text{ }i=1,...,n,\text{ and }V_{n}\left( 0\right) =U_{1,n}.
\end{equation*}%
Let $\beta _{n}\left( s\right) =\sqrt{n}\left( s-V_{n}\left( s\right)
\right) ,0\leq s\leq 1$, be the corresponding quantile empirical process,
for $n\geq 1.$ \medskip

\noindent We use the well-known by Gaussian approximation given in \cite%
{CsCsHM86} Corollary 2.1. It says that: on the probability space $\left(
\Omega ,\mathcal{A},\mathbb{P}\right) ,$ there exists a sequence of Brownian
bridges $\left\{ \mathbb{B}_{n}\left( s\right) ;\text{ }0\leq s\leq
1\right\} $ has the representation 
\begin{equation*}
\left\{ \mathbb{B}_{n}\left( s\right) ;\text{ }0\leq s\leq 1\right\} \overset%
{D}{=}\left\{ W_{n}\left( s\right) -sW_{n}\left( 1\right) ;\text{ }0\leq
s\leq 1\right\} ,
\end{equation*}%
where $W_{n}$ is a standard Wiener process such that for every $0\leq \zeta
<1/2,$%
\begin{equation}
\sup_{1/n\leq s\leq 1-1/n}\frac{n^{\zeta }\left\vert \beta _{n}\left(
	s\right) -\mathbb{B}_{n}\left( s\right) \right\vert }{\left( s\left(
	1-s\right) \right) ^{1/2-\zeta }}=O_{\mathbb{P}}\left( n^{-\zeta }\right) .
\label{approxi}
\end{equation}%
Our main result is the following

\begin{theorem}
	\label{Theorem2}Let $F$ be a df satisfying (\ref{second-cond}) with $\gamma
	>1/2$ and suppose that $F^{\leftarrow }\left( \cdot \right) $ is
	continuously differentiable on $[0,1).$ Let $k=k_{n}$ satisfying (\ref{K})
	such that $\sqrt{k}A\left( n/k\right) \rightarrow 0$ as $n\rightarrow \infty
	.$ For any $1\leq \rho <1/\gamma ,$ we have%
	\begin{equation*}
	\frac{\sqrt{n}\left( \widetilde{\Pi }_{\psi ,n}-\Pi _{\psi }\right) }{%
		(k/n)^{1/\rho -1/2}F^{\leftarrow }(1-k/n)}\overset{d}{\rightarrow }\mathcal{N%
	}\left( 0,\sigma ^{2}\left( \gamma ,\rho \right) \right) ,\text{ as }%
	n\rightarrow \infty ,
	\end{equation*}%
	where%
	\begin{eqnarray*}
		\sigma ^{2}\left( \gamma ,\rho \right) &=&\gamma ^{2}+\frac{\gamma ^{2}\rho
			\left( \rho -2\rho \gamma ^{2}+2\gamma \right) }{\left( \gamma \rho
			-1\right) ^{2}}+\frac{2\gamma ^{2}}{\left( \rho +\gamma \rho -1\right)
			\left( \rho +2\gamma \rho -2\right) } \\
		&&+\frac{2\gamma }{2\gamma -1}-\frac{2\gamma \rho \left( \rho \gamma
			^{2}-\rho \gamma +1\right) }{\left( \gamma \rho -1\right) \left( \rho
			+\gamma \rho -1\right) }.
	\end{eqnarray*}
\end{theorem}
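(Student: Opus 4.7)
The plan is to start from the decomposition $\widetilde{\Pi}_{\psi,n} - \Pi_\psi = \sum_{j=1}^{3}\bigl(\Pi_{\psi,n}^{(j)} - \Pi_\psi^{(j)}\bigr)$ introduced in Section \ref{sec2}, pass everything to the uniform scale via the probability integral transform $X_{n-i+1:n} \stackrel{d}{=} F^{\leftarrow}(1-U_{i:n})$, and then replace the quantile empirical process $\beta_n$ by a Brownian bridge $\mathbb{B}_n$ through (\ref{approxi}). The second-order condition (\ref{second-cond}) will be used to control the regularly varying behavior of $F^{\leftarrow}(1-\cdot)$ near zero, and the assumption $\sqrt{k}\,A(n/k) \to 0$ is what turns all residual bias terms into $o_P(1)$ after normalization by $\sqrt{n}/\bigl((k/n)^{1/\rho-1/2}F^{\leftarrow}(1-k/n)\bigr)$.

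For the first two summands, with $\psi(t) = t^{1/\rho}$ the integral in $\Pi_{\psi,n}^{(2)}$ evaluates in closed form, giving
\[
\Pi_{\psi,n}^{(1)} + \Pi_{\psi,n}^{(2)} = \frac{(k/n)^{1/\rho}\,X_{n-k:n}}{1-\widehat\gamma\rho},
\]
while (\ref{second-cond}) shows that $\Pi_{\psi}^{(1)} + \Pi_{\psi}^{(2)} = (k/n)^{1/\rho}F^{\leftarrow}(1-k/n)/(1-\gamma\rho)$ up to a term of order $(k/n)^{1/\rho}F^{\leftarrow}(1-k/n)A(n/k)$ that is absorbed by $\sqrt{k}A(n/k)\to 0$. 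A first-order Taylor expansion of the ratio around $(F^{\leftarrow}(1-k/n),\gamma)$ then reduces this block, after multiplication by $\sqrt{n}/\bigl((k/n)^{1/\rho-1/2}F^{\leftarrow}(1-k/n)\bigr)$, to
\[
\frac{\sqrt{k}\bigl(X_{n-k:n}/F^{\leftarrow}(1-k/n)-1\bigr)}{1-\gamma\rho} + \frac{\rho\,\sqrt{k}(\widehat\gamma-\gamma)}{(1-\gamma\rho)^{2}} + o_P(1).
\]
The first term admits the classical representation $\gamma\,\mathbb{B}_n(k/n)\sqrt{n/k}/(1-\gamma\rho) + o_P(1)$, while the second is a linear functional of $\mathbb{B}_n$ supported on $(0,k/n)$ coming from Theorem~2 of \cite{BeScSt14} applied to the t-Hill estimator.

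For the third summand, I would write $R_n^{(3)} := \Pi_{\psi,n}^{(3)}-\Pi_\psi^{(3)}$ in Stieltjes form and apply a mean-value step on $s\mapsto F^{\leftarrow}(1-s)$ to obtain
\[
R_n^{(3)} = -\frac{1}{\sqrt{n}}\int_{k/n}^{1}(F^{\leftarrow})'(1-s)\,\beta_n(s)\,d\psi(s) + o_P\!\left(\frac{(k/n)^{1/\rho-1/2}F^{\leftarrow}(1-k/n)}{\sqrt{n}}\right),
\]
with the discretization error between the L-statistic and its Stieltjes counterpart handled as in Theorem~3.2 of \cite{BrMeNeRi} under (\ref{condition gamabeta}). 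After substituting $\mathbb{B}_n$ for $\beta_n$ through (\ref{approxi}) and using a Karamata-type estimate for $(F^{\leftarrow})'$ on $(k/n,1)$, this becomes an integral of $\mathbb{B}_n$ against an explicit power-weight kernel. Collecting the three blocks then writes the normalized error as a single linear functional $\int_0^1 \Phi_{\gamma,\rho}(s)\,d\mathbb{B}_n(s)$, which is centered Gaussian with variance equal to the $L^2$ norm of $\Phi_{\gamma,\rho}$ under the bridge covariance $s\wedge t - st$. The main obstacle will be this last step: unlike Hill's kernel $s^{-1}$, the t-Hill kernel from \cite{BeScSt14} is not scale-free, so the cross-covariances between the $\widehat\gamma$-contribution and both the boundary term at $s=k/n$ (the $X_{n-k:n}$ piece) and the L-statistic integrand have to be recomputed term by term, and evaluating the resulting power-weight integrals against the factor $s^{-\gamma-1}s^{1/\rho-1}$ under $1/2<\gamma<1/\rho$ should produce exactly the five rational expressions appearing in $\sigma^{2}(\gamma,\rho)$.
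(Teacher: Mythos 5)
Your proposal is correct and follows essentially the same route as the paper: the same split of $\widetilde{\Pi }_{\psi ,n}-\Pi _{\psi }$ into the three pieces $\Pi _{\psi ,n}^{\left( j\right) }-\Pi _{\psi }^{\left( j\right) }$, the delta-method treatment of the factor $1/(1-\widehat{\gamma }\rho )$ combined with the Beran--Schell--Stehl\'{\i}k Brownian-bridge representation of $\sqrt{k}\left( \widehat{\gamma }-\gamma \right) $ (the paper's Proposition \ref{Cor}), the Hungarian-construction approximation (\ref{approxi}) for the intermediate order statistic and the trimmed $L$-statistic, the use of (\ref{second-cond}) with $\sqrt{k}A\left( n/k\right) \rightarrow 0$ to kill the bias, and a final term-by-term variance/covariance computation under the bridge covariance. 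The only difference is organizational: you Taylor-expand the combined block $\Pi _{\psi ,n}^{\left( 1\right) }+\Pi _{\psi ,n}^{\left( 2\right) }=(k/n)^{1/\rho }X_{n-k:n}/(1-\widehat{\gamma }\rho )$ directly, whereas the paper splits $\Pi _{\psi ,n}^{\left( 2\right) }$ into the four terms $\Delta _{1n},\ldots ,\Delta _{4n}$ imported from Necir \emph{et al.} (2007), and your reflected-bridge convention ($\mathbb{B}_{n}(k/n)$ instead of $\mathbb{B}_{n}(1-k/n)$) is harmless provided it is applied consistently in the cross-covariance computations.
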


\section{\textbf{Simulation study}\label{sec4}}

\subsection{\textbf{Performance and comparative study of} $\widetilde{\Pi }%
	_{n}$ \textbf{and} $\widehat{\Pi }_{n}$}

\noindent In this simulation study we examine the performance of our
estimator $\widetilde{\Pi }_{n}\left( \widehat{\gamma },k\right) $\ given in
(\ref{Br}) and compare it with that of $\widehat{\Pi }_{n}\left( \widehat{%
	\gamma }^{H},k\right) $ given in (\ref{NeMe}). Thus we follow the steps
below.\medskip\ 

\noindent \textbf{Step 1:} We generate $1000$ pseudorandom samples of size $%
n=100,200,500$ and $1000$ from Pareto cdf with $\gamma =0.6.$ \medskip\ 

\noindent \textbf{Step 2:} We estimate the tail index parameter by Hill and
t-Hill estimators $\widehat{\gamma }^{H}(k_{1}^{\ast })$ and $\widehat{%
	\gamma }(k_{2}^{\ast }),$ respectively given in (\ref{Hill}) and (\ref%
{t-Hill}). We adopt the Reiss and Thomas algorithm (see \citeauthor{ReTo07}, %
\citeyear[page 137]{ReTo07}), for choosing the optimal numbers of upper
extremes $k_{1}$ and $k_{2}$. By this methodology, we define the optimal
sample fraction of upper order statistics $k_{j}^{\ast }$ by%
\begin{equation*}
k_{j}^{\ast }:=\arg \min_{k}\frac{1}{k}\sum_{i=1}^{k}i^{\theta }\left\vert 
\widehat{\gamma }_{j}\left( i\right) -\text{median}\left\{ \widehat{\gamma }%
_{j}\left( 1\right) ,...,\widehat{\gamma }_{j}\left( k\right) \right\}
\right\vert ,j=1,2
\end{equation*}%
where $\widehat{\gamma }_{1}=\widehat{\gamma }^{H}$ and $\widehat{\gamma }%
_{2}=\widehat{\gamma }.$ On the light of our simulation study,\textbf{\ }we
obtained reasonable results by choosing $\theta =0.3.$ \medskip

\noindent \textbf{Step 3:} We fix the distortion parameter with respect to
Condition (\ref{condition gamabeta}) by $\rho =1.12,$ then we compute the
bias and RMSE of the four estimators $\widehat{\gamma }^{H}(k_{1}^{\ast }),$ 
$\widehat{\gamma }(k_{2}^{\ast }),$ $\widetilde{\Pi }_{n}\left( \widehat{%
	\gamma },k_{1}^{\ast }\right) $\ and $\widehat{\Pi} _{n}\left( \widehat{%
	\gamma }^{H},k_{2}^{\ast }\right) $. The results are summarized in Table \ref%
{A1}. We see that when dealing with large samples our estimator performs
better.

%TCIMACRO{\TeXButton{B}{\begin{table}[h] \centering}}%
%BeginExpansion
\begin{table}[h] \centering%
	%EndExpansion
	\begin{tabular}{lcccccccccc}
		& $k_{1}^{\ast }$ & \multicolumn{2}{c}{$\widehat{\gamma }(k_{1}^{\ast })$} & 
		\multicolumn{2}{c}{$\widetilde{\Pi }_{n}\left( \widehat{\gamma },k_{1}^{\ast
			}\right) $} & $k_{2}^{\ast }$ & \multicolumn{2}{c}{$\widehat{\gamma }%
			^{H}(k_{2}^{\ast })$} & \multicolumn{2}{c}{$\widehat{\Pi} _{n}\left( 
			\widehat{\gamma }^{H},k_{2}^{\ast }\right) $} \\ \cline{2-11}
		${\small n}$ &  & bias & RMSE & bias & RMSE &  & bias & RMSE & bias & RMSE
		\\ \hline\hline
		${\small 100}$ & ${\small 10}$ & ${\small -0.0733}$ & ${\small 0.2511}$ & $%
		{\small 0.3618}$ & ${\small 0.5199}$ & ${\small 17}$ & ${\small -0.1641}$ & $%
		{\small 0.2865}$ & ${\small 0.4096}$ & ${\small 0.7332}$ \\ 
		${\small 200}$ & ${\small 23}$ & ${\small -0.0571}$ & ${\small 0.1821}$ & $%
		{\small 0.3562}$ & ${\small 0.5147}$ & ${\small 34}$ & ${\small -0.0993}$ & $%
		{\small 0.2350}$ & ${\small 0.3918}$ & ${\small 0.7185}$ \\ 
		${\small 500}$ & ${\small 62}$ & ${\small -0.0299}$ & ${\small 0.1142}$ & $%
		{\small 0.3404}$ & ${\small 0.4820}$ & ${\small 86}$ & ${\small -0.0301}$ & $%
		{\small 0.0739}$ & ${\small 0.3639}$ & ${\small 0.6936}$ \\ 
		${\small 1000}$ & ${\small 129}$ & ${\small -0.0147}$ & ${\small 0.0798}$ & $%
		{\small 0.1966}$ & ${\small 0.2687}$ & ${\small 169}$ & ${\small -0.0181}$ & 
		${\small 0.0545}$ & ${\small 0.2827}$ & ${\small 0.5279}$ \\ \hline\hline
		&  &  &  &  &  &  &  &  &  & 
	\end{tabular}
	\caption{ $\widehat{\gamma }(k_{1}^{\ast }),$ $\widehat{\gamma }	^{H}(k_{2}^{\ast })$ $\widetilde{\Pi }_{n}\left( \widehat{\gamma},k_{1}^{\ast }\right) $\ and $\widehat{\Pi}
		_{n}\left( \widehat{\gamma},k_{2}^{\ast }\right)$ estimators based on 1000 samples of Pareto-distributed claim amounts
		with tail index 0.6 and distortion parameter $\rho=1.12$. The exact value of the premium is 2.0487.}%
	\label{A1}%
	%TCIMACRO{\TeXButton{E}{\end{table}}}%
	%BeginExpansion
\end{table}%
%EndExpansion

\subsection{\textbf{Comparative robustness study}}

\noindent In this subsection we study the sensitivity to outliers of $%
\widetilde{\Pi }_{n}\left( \widehat{\gamma },k_{1}^{\ast }\right) $\ and
compare it with that of $\widehat{\Pi}_{n}\left( \widehat{\gamma }%
^{H},k_{2}^{\ast}\right).$ We consider an $\epsilon $-contaminated model
known by mixture of Pareto distributions%
\begin{equation}
F_{\gamma _{1},\gamma _{2},\epsilon }\left( x\right) =1-\left( 1-\epsilon
\right) x^{-1/\gamma _{1}}+\epsilon x^{-1/\gamma _{2}},\text{ }
\label{mixture}
\end{equation}%
where $\gamma _{1},\gamma _{2}>0$ and $0<\epsilon <0.5$ is the fraction of
contamination.\ Note that for $\epsilon =0,$ $\widehat{\gamma }^{H}$ and $%
\widehat{\gamma}$ are asymptotically unbiased. Therefore, for $\epsilon >0,$
the effect of contamination becomes immediately apparent. If $%
\gamma_{1}<\gamma _{2}$ and $\epsilon >0,$ (\ref{mixture}) corresponds to a
Pareto distribution contaminated by a longer tailed distribution. For the
implementation of mixtures models to the outliers study one refers, for
instance, to \cite[page 43]{BaLe95}. In this context, we proceed our study
as follows.\medskip

\noindent First, we consider $\gamma _{1}=0.6,$ $\gamma _{2}=2$ to have the
contaminated model and let $\rho =1.12.$\ Then we consider four
contamination scenarios according to $\epsilon=5\%,$ $10\%,$ $15\%,$ $25\%.$%
\smallskip

\noindent For each value $\epsilon ,$ we generate $1000$ samples of size $%
n=100,$ $200$ and $1000$ from the model (\ref{mixture}). Finally, we compare
the $\widetilde{\Pi}_{n}\left(\widehat{\gamma},k_{1}^{\ast }\right)$ and $%
\widehat{\Pi}_{n}\left(\widehat{\gamma}^{H},k_{2}^{\ast}\right)$ estimators
with this true value, by computing for each estimator, the appropriate bias
and RMSE and summarize the results in Table \ref{A2}.

%TCIMACRO{\TeXButton{B}{\begin{table}[h] \centering}}%
%BeginExpansion
\begin{table}[h] \centering%
	%EndExpansion
	\begin{tabular}{lccccc}
		&  & \multicolumn{2}{c}{$\widetilde{\Pi }_{n}\left( \widehat{\gamma }%
			,k_{1}^{\ast }\right) $} & \multicolumn{2}{c}{$\widehat{\Pi}_{n}\left( 
			\widehat{\gamma }^{H},k_{2}^{\ast}\right)$} \\ \cline{2-6}
		${\small n}$ & $\%$ contamination & bias & RMSE & bias & RMSE \\ \hline\hline
		${\small 100}$ & ${\small 5}$ & ${\small 0.4043}$ & ${\small 0.6664}$ & $%
		{\small -0.4286}$ & ${\small 1.4727}$ \\ 
		& ${\small 10}$ & ${\small 0.4389}$ & ${\small 0.6862}$ & ${\small -0.7291}$
		& ${\small 1.9123}$ \\ 
		& ${\small 15}$ & ${\small 0.4598}$ & ${\small 0.7464}$ & ${\small -1.2786}$
		& ${\small 2.1247}$ \\ 
		& ${\small 25}$ & ${\small 1.0578}$ & ${\small 1.1305}$ & ${\small -1.2103}$
		& ${\small 2.1828}$ \\ \hline\hline
		$200$ & ${\small 5}$ & ${\small 0.3831}$ & ${\small 0.5532}$ & ${\small %
			-0.4713}$ & ${\small 1.5118}$ \\ 
		& ${\small 10}$ & ${\small 0.3964}$ & ${\small 0.5675}$ & ${\small -1.2496}$
		& ${\small 2.1907}$ \\ 
		& ${\small 15}$ & ${\small 0.4508}$ & ${\small 0.6870}$ & $-1.4355$ & $%
		{\small 2.3107}$ \\ 
		& ${\small 25}$ & ${\small 0.9470}$ & ${\small 1.0197}$ & ${\small -1.7366}$
		& ${\small 2.3274}$ \\ \hline\hline
		${\small 1000}$ & ${\small 5}$ & ${\small 0.2124}$ & ${\small 0.3211}$ & $%
		{\small -0.3794}$ & ${\small 2.1222}$ \\ 
		& ${\small 10}$ & ${\small 0.2329}$ & ${\small 0.3349}$ & ${\small -1.0662}$
		& ${\small 2.3978}$ \\ 
		& ${\small 15}$ & ${\small 0.2931}$ & ${\small 0.3749}$ & ${\small -1.2501}$
		& ${\small 2.0355}$ \\ 
		& ${\small 25}$ & ${\small 0.8124}$ & ${\small 0.9291}$ & ${\small -1.5238}$
		& ${\small 2.3596}$ \\ \hline\hline
		&  &  &  &  & 
	\end{tabular}%
	\caption{$\widetilde{\Pi }_{n}\left( \widehat{\gamma},k_{1}^{\ast }\right)
		$\ and $\widehat{\Pi}_{n}\left( \widehat{\gamma }^{H},k_{2}^{\ast}\right)$
		are based on 1000 samples of mixture of Pareto distributions with tail index
		$0.6,$ $\epsilon=5\%, 10\%, 15\%, 25\%$ and distortion parameter
		$\rho=1.12$. The exact value of the premium is 2.0487.}\label{A2}%
	%TCIMACRO{\TeXButton{E}{\end{table}}}%
	%BeginExpansion
\end{table}%
%EndExpansion

\noindent As expected, the estimator $\widehat{\Pi }_{n}\left( \widehat{%
	\gamma }^{H},k_{2}^{\ast }\right) $ as well as $\widetilde{\Pi }_{n}\left( 
\widehat{\gamma },k_{1}^{\ast }\right) $ turn out to be more sensitive to
this type of contaminations. For example, in $0\%$ contamination for $n=200$%
, the couple (bias, RMSE) for $\widehat{\Pi }_{n}\left( \widehat{\gamma }%
^{H},k_{2}^{\ast }\right) $ take the values $\left( 0.3918,0.7185\right) $,
while for $15\%$ contamination the bias and the RMSE are given by the couple 
$\left( -1.4355,2.3107\right) $. We may conclude that the bias and RMSE of $%
\widehat{\Pi }_{n}\left( \widehat{\gamma }^{H},k_{2}^{\ast }\right) $
estimator are more sensitive (or note robust) to outliers. However, for $0\%$
contamination the (bias, RMSE) of $\widetilde{\Pi }_{n}\left( \widehat{%
	\gamma },k_{1}^{\ast }\right) $ is $\left( 0.3562,0.5147\right) ,$ while for 
$15\%$ contamination is $\left( 0.4508,0.6870\right) .$ Both the bias and
the RMSE of $\widetilde{\Pi }_{n}\left( \widehat{\gamma },k_{1}^{\ast
}\right) $ estimation are note sensitive to outliers. Then we may conclude
that is the better estimator.

\section{\textbf{Concluding notes\label{sec5}}}

\noindent We showed that the new estimator of premium based on t-Hill
estimator is more robust and performs better than the one based on Hill
estimator proposed by \cite{NeMe09}. Our estimator $\widetilde{\Pi }%
_{n}\left( \widehat{\gamma },k\right) $ is based on Weissman's estimation of
high quantiles, so we would lead to improve our result to use one of several
bias-reduced estimators have been proposed (see for example \citealp{MaBe03}%
).

\section{Proofs\label{sec6}}

\noindent To establish the asymptotic normality of $\widetilde{\Pi }_{\psi
	,n}$\ we need the asymptotic approximation of $\widehat{\gamma }$ with the
same sequence of Brownian bridges as $\widetilde{\Pi }_{\psi ,n},$ for this
reason we give the following results.

\begin{proposition}
	\label{Cor}Assume that the second order condition (\ref{second-cond}) holds
	with $\gamma >1/2$ and let $k=k_{n}$ be an integer sequence satisfying (\ref%
	{K}) and $\sqrt{k}A\left( n/k\right) \rightarrow 0.$ Then,\ there exists a
	sequence of Brownian bridges $\left\{ \mathbb{B}_{n}\left( s\right) ,\text{ }%
	0\leq s\leq 1\right\} $\ such that%
	\begin{equation*}
	\sqrt{k}\left( \widehat{\gamma }-\gamma \right) =\gamma \left( \gamma
	+1\right) ^{2}\int_{0}^{1}s^{\gamma -1}\mathbb{B}_{n}\left( s\right)
	ds+o_{p}\left( 1\right) ,
	\end{equation*}%
	leading to 
	\begin{equation*}
	\sqrt{k}\left( \widehat{\gamma }-\gamma \right) \overset{d}{\rightarrow }%
	\mathcal{N}\left( 0,\frac{\gamma ^{2}\left( 1+\gamma \right) ^{2}}{\left(
		1+2\gamma \right) }\right) ,\text{ as }n\rightarrow \infty ,
	\end{equation*}
\end{proposition}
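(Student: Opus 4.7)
The plan is to link $\widehat{\gamma}$ to the uniform quantile process $V_n$ and then invoke the Hungarian approximation (\ref{approxi}). First, write $\widehat{\gamma}=1/S_n-1$ with $S_n:=k^{-1}\sum_{j=1}^{k}X_{n-k:n}/X_{n-j+1:n}$. The consistency of $\widehat{\gamma}$ (from StFaS12) yields $S_n\overset{P}{\rightarrow}(1+\gamma)^{-1}$, and a one-step Taylor expansion of $x\mapsto 1/x-1$ around $(1+\gamma)^{-1}$ gives
$$\sqrt{k}(\widehat{\gamma}-\gamma)=-(1+\gamma)^{2}\sqrt{k}\bigl(S_n-(1+\gamma)^{-1}\bigr)+o_{p}(1).$$
Using the probability integral transform $X_{n-j+1:n}\stackrel{d}{=}F^{\leftarrow}(1-U_{j:n})$ and applying the second-order condition (\ref{second-cond}) with $t=U_{k+1:n}$, $x=U_{j:n}/U_{k+1:n}$, one finds
$$\frac{F^{\leftarrow}(1-U_{k+1:n})}{F^{\leftarrow}(1-U_{j:n})}=\left(\frac{U_{j:n}}{U_{k+1:n}}\right)^{\!\gamma}+A(U_{k+1:n})\,\Phi_{\gamma,\tau}\!\left(\frac{U_{j:n}}{U_{k+1:n}}\right)(1+o_{p}(1)),$$
with $\Phi_{\gamma,\tau}$ of power type. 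Since $U_{k+1:n}\sim k/n$ and $A$ is regularly varying, the aggregate bias contribution is $O_{p}(\sqrt{k}A(n/k))=o_{p}(1)$ by assumption.

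Next, rewrite the leading sum as a Riemann integral,
$$\frac{1}{k}\sum_{j=1}^{k}\left(\frac{U_{j:n}}{U_{k+1:n}}\right)^{\!\gamma}=\int_{0}^{1}\!\!\left(\frac{V_n((k/n)u)}{V_n(k/n)}\right)^{\!\gamma}\!du+O_{p}(k^{-1}),$$
plug $V_n(s)=s-n^{-1/2}\beta_n(s)$ into a first-order Taylor expansion of the integrand about the identity, and set $\widetilde{\beta}_n(u):=\sqrt{n/k}\,\beta_n((k/n)u)$ to obtain
$$\sqrt{k}\bigl(S_n-(1+\gamma)^{-1}\bigr)=-\gamma\int_{0}^{1}\!u^{\gamma-1}\widetilde{\beta}_n(u)\,du+\frac{\gamma}{1+\gamma}\widetilde{\beta}_n(1)+o_{p}(1).$$
Invoke (\ref{approxi}) to replace $\widetilde{\beta}_n$ by the Gaussian process $\sqrt{n/k}\,\mathbb{B}_n((k/n)\cdot)$; because $k/n\downarrow 0$, the latter is (by Brownian scaling and the negligibility of the bridge tie-down $sW(1)$ for small $s$) asymptotically equivalent to a standard Wiener process $W_n$ on $[0,1]$. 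Setting $\mathbb{B}_n(s):=W_n(s)-sW_n(1)$ and using the exact identity
$$-\gamma\int_{0}^{1}\!u^{\gamma-1}W_n(u)\,du+\frac{\gamma\,W_n(1)}{1+\gamma}=-\gamma\int_{0}^{1}\!s^{\gamma-1}\mathbb{B}_n(s)\,ds,$$
together with the delta-method identity of the first paragraph, produces the claimed representation. Asymptotic normality with variance $\gamma^{2}(1+\gamma)^{2}/(1+2\gamma)$ follows from the routine calculation $\mathrm{Var}\bigl(\int_{0}^{1}s^{\gamma-1}\mathbb{B}(s)\,ds\bigr)=1/[(1+\gamma)^{2}(1+2\gamma)]$ via the bridge covariance $\min(s,t)-st$.

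The main obstacle is uniform control of both the Taylor remainders and the Hungarian error near $u=0$, where the weight $u^{\gamma-1}$ is singular and the bound in (\ref{approxi}) is weakest. Inserting $|\beta_n-\mathbb{B}_n|=O_{p}(n^{-\zeta})(s(1-s))^{1/2-\zeta}$ into the weighted integral produces an $o_{p}(1)$ remainder precisely when $\gamma>1/2$, for a suitable $\zeta\in(0,1/2)$; this is exactly the standing hypothesis. A secondary technical point is controlling the second-order term $A(U_{k+1:n})\Phi_{\gamma,\tau}$ for small $U_{j:n}/U_{k+1:n}$, which may blow up when $\tau<0$; a Potter-type bound on $F^{\leftarrow}$ combined with $\sqrt{k}A(n/k)\to 0$ handles this, along the template of Theorem~2 in BeScSt14 on which the argument is patterned.
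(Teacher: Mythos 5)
Your proposal is correct and follows essentially the same route as the paper: write $\widehat{\gamma}=S_k^{-1}-1$ with $S_k=\frac{1}{k}\sum_{j=1}^{k}X_{n-k:n}/X_{n-j+1:n}$, apply the delta method with $g(x)=1/x-1$, use the identity $-\gamma\int_0^1 u^{\gamma-1}W_n(u)\,du+\frac{\gamma}{\gamma+1}W_n(1)=-\gamma\int_0^1 s^{\gamma-1}\mathbb{B}_n(s)\,ds$, and obtain the variance $\gamma^2(1+\gamma)^2/(1+2\gamma)$ from the bridge covariance, exactly as in the paper. The only difference is that where the paper simply imports the Wiener-process representation of $\sqrt{k}\left(S_k-\frac{1}{\gamma+1}\right)$ from Theorem 2 of Beran, Schell and Stehl\'{\i}k (2014), you sketch its derivation via the second-order condition and the Hungarian approximation (\ref{approxi}); that sketch is sound, apart from the minor mis-attribution that $\gamma>1/2$ is what makes the weighted approximation remainder $o_p(1)$ (the bound gives $O_p(k^{-\zeta})$ for any $\zeta\in(0,1/2)$ and any $\gamma>0$; the restriction $\gamma>1/2$ comes from the premium-estimation setting, not from this step).
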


\begin{proof}
	Our proofs are conducted in the probability space described in Section \ref%
	{sec3}. Then we are entitled to write%
	\begin{equation*}
	S_{k}:=\frac{1}{k}\sum_{j=1}^{k}\frac{X_{n-k:n}}{X_{n-j+1:n}}
	\end{equation*}%
	then $\widehat{\gamma }=S_{k}^{-1}-1$. The asymptotic normality of $\widehat{%
		\gamma }$\ is established in \cite{BeScSt14} by given their Wiener process
	representation. Here we suppose that $\sqrt{k}A\left( n/k\right) \rightarrow
	\lambda =0.$ So 
	\begin{equation*}
	\sqrt{k}\left( S_{k}-\frac{1}{\gamma +1}\right) =\frac{\gamma }{\gamma +1}%
	W_{n}\left( 1\right) -\gamma \int_{0}^{1}t^{\gamma -1}W_{n}\left( t\right)
	dt+o_{p}\left( 1\right) .
	\end{equation*}%
	Note that 
	\begin{equation*}
	\mathbb{B}_{n}\left( s\right) \overset{d}{=}W_{n}\left( s\right)
	-sW_{n}\left( 1\right) ,
	\end{equation*}%
	it follows that 
	\begin{equation*}
	\sqrt{k}\left( S_{k}-\frac{1}{\gamma +1}\right) =-\gamma
	\int_{0}^{1}s^{\gamma -1}\mathbb{B}_{n}\left( s\right) dt+o_{p}\left(
	1\right) .
	\end{equation*}%
	Using the map $g\left( x\right) =1/x-1,$ since $g\left( 1/\left( \gamma
	+1\right) \right) =\gamma $ and applying the delta method yields 
	\begin{equation*}
	\sqrt{k}\left( \widehat{\gamma }-\gamma \right) =\left( \gamma +1\right)
	^{2}\gamma \int_{0}^{1}s^{\gamma -1}\mathbb{B}_{n}\left( s\right)
	ds+o_{p}\left( 1\right) .
	\end{equation*}%
	It is clear that $\sqrt{k}\left( \widehat{\gamma }-\gamma \right) $ is a
	Gaussian rv with mean $0$ and variance $\frac{\gamma ^{2}\left( 1+\gamma
		\right) ^{2}}{\left( 1+2\gamma \right) }.$ This completes the proof of
	Proposition \ref{Cor}.
\end{proof}

\subsection{\textbf{Proof of Theorem \protect\ref{Theorem2}}}

\noindent Making use of Proposition \ref{Cor} and from \cite{NeMeMe07} we
showe that under the assumptions of Theorem \ref{Theorem2}, there exists a
sequence of Brownian bridges $\left\{ \mathbb{B}_{n}\left( s\right) ,0\leq
s\leq 1\right\} $ such that, for all large $n$%
\begin{equation*}
\frac{\left( \Pi _{\psi ,n}^{\left( 1\right) }-\Pi _{\psi }^{\left( 1\right)
	}\right) }{(k/n)^{1/\rho }F^{\leftarrow }\left( 1-k/n\right) }=-\gamma
\left( n/k\right) ^{1/2}\mathbb{B}_{n}\left( 1-k/n\right) +o_{p}\left(
1\right) .
\end{equation*}%
Let $\mathbb{U}$ be the left-continuous inverse of $1/(1-F).$ Note that $%
\mathbb{U}(t)$ is defined for $t>1.$ Let $Y_{1},Y_{2}....$ be independent
and identically distributed rv's with cdf $1-1/y,y>1,$ and let $Y_{1,n}\leq
Y_{2,n}\leq ...\leq Y_{n,n}$ be the associated order statistics. Then, for $%
\rho \geq 1,$ we may rewrite the statistic $\Pi _{\psi ,n}^{\left( 2\right)
} $ as%
\begin{equation*}
\Pi _{\psi ,n}^{\left( 2\right) }=(k/n)^{1/\rho }\frac{\widehat{\gamma }\rho 
}{1-\widehat{\gamma }\rho }X_{n-k:n}.
\end{equation*}
Then%
\begin{equation*}
\Pi _{\psi ,n}^{\left( 2\right) }=(k/n)^{1/\rho }\frac{\widehat{\gamma }\rho 
}{1-\widehat{\gamma }\rho }\mathbb{U}\left( Y_{n-k:n}\right) ,
\end{equation*}%
where $\widehat{\gamma }$ is the t-Hill estimator of $\gamma .$ So we have 
\begin{equation*}
\frac{\sqrt{k}\left( \Pi _{\psi ,n}^{\left( 2\right) }-\Pi _{\psi }^{\left(
		2\right) }\right) }{(k/n)^{1/\rho }\mathbb{U}\left( n/k\right) }%
:=\sum_{j=1}^{4}\Delta _{jn},
\end{equation*}%
where%
\begin{eqnarray*}
	\Delta _{1n}:= &&\sqrt{k}\frac{\widehat{\gamma }\rho }{1-\widehat{\gamma }%
		\rho }\left( \frac{\mathbb{U}\left( Y_{n-k:n}\right) }{\mathbb{U}\left(
		n/k\right) }-\left( \frac{Y_{n-k:n}}{n/k}\right) ^{\gamma }\right) , \\
	\Delta _{2n}:= &&\sqrt{k}\frac{\widehat{\gamma }\rho }{1-\widehat{\gamma }%
		\rho }\left( \left( \frac{Y_{n-k:n}}{n/k}\right) ^{\gamma }-1\right) , \\
	\Delta _{3n}:= &&\sqrt{k}\left( \frac{\widehat{\gamma }\rho }{1-\widehat{%
			\gamma }\rho }-\frac{\gamma \rho }{1-\gamma \rho }\right)
\end{eqnarray*}%
and%
\begin{equation*}
\Delta _{4n}:=\sqrt{k}(k/n)^{-1/\rho }\left( \frac{\frac{(k/n)^{1/\rho }\rho 
	}{\left( 1/\gamma -\rho \right) }\mathbb{U}\left( n/k\right) -\Pi _{\psi
}^{\left( 2\right) }}{\mathbb{U}\left( n/k\right) }\right) .
\end{equation*}%
As showed in \cite{NeMeMe07}, we have : $\Delta _{1n}\rightarrow 0$ and $%
\Delta _{4n}\rightarrow 0$ as $n\rightarrow \infty .$\medskip

\noindent Next, we show that $\Delta _{2n}+\Delta _{3n}$ is asymptotically
normal. Assume, without loss of generality, that the rv's $(Y_{n})_{n\geq 1}$%
are defined on a probability space $(\Omega ,\mathcal{A},\mathbb{P})$ which
carries the sequence $(U_{n})_{n\geq 1}$ in such a way that $Y_{n}=\left(
1-U_{n}\right) ^{-1}$ for $n=1,2,...$ and $Y_{i,n}=\left( 1-U_{i,n}\right)
^{-1},$ $i=1,...,n,$.$\ $Then, this allows us to write $Y_{n-i+1,n}=\left(
1-V_{n}\left( 1-s\right) \right) ^{-1},$ for $\dfrac{i-1}{n}<s\leq \dfrac{i}{%
	n},$ $i=1,...,n$. From \cite{NeMeMe07} we have For $\Delta _{2n}$\ 
\begin{equation*}
\Delta _{2n}=-\left( n/k\right) ^{1/2}\frac{\rho \gamma ^{2}}{1-\gamma \rho }%
\left( 1+o_{p}\left( 1\right) \right) \mathbb{B}_{n}\left( 1-k/n\right) .
\end{equation*}%
For $\Delta _{3n}$ and by using the map $h\left( \theta \right) =\rho
/\left( \frac{1}{\theta }-\rho \right) \ $and applying the delta method
yields: 
\begin{equation*}
\Delta _{3n}=\frac{\rho }{\left( \rho \gamma -1\right) ^{2}}\sqrt{k}\left(
\gamma -\widehat{\gamma }\right) .
\end{equation*}%
From Corollary \ref{Cor} we get%
\begin{equation*}
\Delta _{3n}=\frac{\gamma \rho \left( \gamma +1\right) ^{2}}{\left( \rho
	\gamma -1\right) ^{2}}\int_{0}^{1}s^{\gamma -1}\mathbb{B}_{n}\left( s\right)
ds+o_{p}\left( 1\right) .
\end{equation*}%
Finally we have%
\begin{equation*}
\frac{\sqrt{k}\left( \Pi _{\psi ,n}^{\left( 2\right) }-\Pi _{\psi }^{\left(
		2\right) }\right) }{(k/n)^{1/\rho }F^{\leftarrow }\left( 1-k/n\right) }=%
\frac{\gamma \rho \left( \gamma +1\right) ^{2}}{\left( \rho \gamma -1\right)
	^{2}}\int_{0}^{1}s^{\gamma -1}\mathbb{B}_{n}\left( s\right) ds-\left(
n/k\right) ^{1/2}\frac{\rho \gamma ^{2}}{1-\gamma \rho }\mathbb{B}_{n}\left(
1-k/n\right) +o_{p}\left( 1\right) .
\end{equation*}%
From \cite{NeMeMe07} we have%
\begin{equation*}
\frac{\sqrt{k}\left( \Pi _{\psi ,n}^{\left( 3\right) }-\Pi _{\psi }^{\left(
		3\right) }\right) }{(k/n)^{1/\rho }F^{\leftarrow }\left( 1-k/n\right) }=%
\frac{\int_{k/n}^{1}s^{1/\rho -1}\mathbb{B}_{n}\left( 1-s\right)
	F^{\leftarrow \prime }\left( 1-s\right) ds}{\rho F^{\leftarrow }\left(
	1-k/n\right) \left( k/n\right) ^{1/\rho -1/2}}+o_{p}\left( 1\right) .
\end{equation*}%
Then 
\begin{equation*}
\frac{\sqrt{n}\left( \widetilde{\Pi }_{\psi ,n}-\Pi _{\psi }\right) }{%
	(k/n)^{1/\rho -1/2}F^{\leftarrow }(1-k/n)}=\Lambda \left( \gamma ,\rho
\right) +o_{p}\left( 1\right)
\end{equation*}%
where%
\begin{equation*}
\Lambda \left( \gamma ,\rho \right) :=W_{n1}+W_{n2}+W_{n3}+o_{p}\left(
1\right)
\end{equation*}%
and%
\begin{eqnarray*}
	W_{n1} &:=&-\left( n/k\right) ^{1/2}\gamma \mathbb{B}_{n}\left( 1-k/n\right)
	, \\
	W_{n2} &:=&\frac{\gamma \rho \left( \gamma +1\right) ^{2}}{\left( \rho
		\gamma -1\right) ^{2}}\int_{0}^{1}s^{\gamma -1}\mathbb{B}_{n}\left( s\right)
	ds-\left( n/k\right) ^{1/2}\frac{\rho \gamma ^{2}}{1-\gamma \rho }\mathbb{B}%
	_{n}\left( 1-k/n\right) , \\
	W_{n3} &:=&\frac{\int_{k/n}^{1}s^{1/\rho -1}\mathbb{B}_{n}\left( 1-s\right)
		F^{\leftarrow \prime }\left( 1-s\right) ds}{\rho F^{\leftarrow }\left(
		1-k/n\right) \left( k/n\right) ^{1/\rho -1/2}}.
\end{eqnarray*}%
It is clear that $\Lambda \left( \gamma ,\rho \right) $ is a Gaussian rv
with mean 0 and variance%
\begin{eqnarray*}
	E\left( \Lambda \left( \gamma ,\rho \right) \right) ^{2} &=&E\left(
	W_{n1}^{2}\right) +E\left( W_{n2}^{2}\right) +E\left( W_{n3}^{2}\right)
	+2E\left( W_{n1}W_{n2}\right) \\
	&&+2E\left( W_{n1}W_{n3}\right) +2E\left( W_{n2}W_{n3}\right) .
\end{eqnarray*}%
An elementary calculation gives, we get 
\begin{eqnarray*}
	E\left( W_{n1}^{2}\right) &=&\gamma ^{2}+o\left( 1\right) , \\
	E\left( W_{n2}^{2}\right) &=&\frac{\gamma ^{2}\rho ^{2}}{\left( 1-\gamma
		\rho \right) ^{2}}+\frac{2\gamma }{2\gamma -1}+o\left( 1\right) , \\
	E\left( W_{n3}^{2}\right) &=&\frac{2\gamma ^{2}}{\left( \rho +\gamma \rho
		-1\right) \left( \rho +2\gamma \rho -2\right) }+o\left( 1\right) , \\
	E\left( W_{n1}W_{n2}\right) &=&\frac{\rho \gamma ^{3}}{1-\gamma \rho }%
	+o\left( 1\right) , \\
	E\left( W_{n1}W_{n3}\right) &=&\frac{\gamma \rho }{\rho +\gamma \rho -1}%
	+o\left( 1\right)
\end{eqnarray*}%
and%
\begin{equation*}
E\left( W_{n2}W_{n3}\right) =-\frac{\gamma ^{3}\rho ^{2}}{\left( \gamma \rho
	-1\right) \left( \rho +\gamma \rho -1\right) }+o\left( 1\right) .
\end{equation*}%
The proof of Theorem \ref{Theorem2} is completed by combining all the
preceding results.\hfill $\square $\bigskip


\begin{thebibliography}{Cs\"{o}rg\H{o} \textit{et al}.(1986)}
	\bibitem[Artzner \textit{et al}.(1999)]{ArDeEbHe99} Artzner, P., Delbaen,
	F., Eber, J.M., Heath, D., 1999. Coherent measures of risk. \textit{%
		Mathematical Finance}. \textbf{9}, 203-228.
	
	\bibitem[Barnett and Lewis(1995)]{BaLe95} Barnett, V. and Lewis, T., 1995. 
	\textit{Outliers in statistical data}. Third ed. John Willey \& Sons. New
	York.
	
	\bibitem[Beirlant \textit{et al}.(2007)]{BeGDFF07} Beirlant, J., Guillou,
	A., Dierckx, G., Fils-Villetard, A., 2007. Estimation of the extreme value
	index and extreme quantiles under random censoring. \textit{Extremes.} 
	\textbf{10}(3), 151-174.
	
	\bibitem[Beran \textit{et al}.(2014)]{BeScSt14} Beran, J.; Schell, D.; Stehl%
	\'{\i}k, M., 2014. The harmonic moment tail index estimator: asymptotic
	distribution and robustness. \textit{Ann. Inst. Statist. Math}. \textbf{66},
	193-220.
	
	\bibitem[Brahimi \textit{et al}.(2011)]{BrMeNeRi} Brahimi, B., Meraghni, D.,
	Necir, A. and Zitikis, R., 2011. Estimating the distortion parameter of the
	proportional-hazard premium for heavy-tailed losses. \textit{Insurance Math.
		Econom}. \textbf{49}(3), 325-334.
	
	\bibitem[Brahimi \textit{et al}.(2013)]{BMNY2013} Brahimi, B., Meraghni, D.,
	Necir, A. and Yahia, D., 2013. A bias-reduced estimator for the mean of a
	heavy-tailed distribution with an infinite second moment. \textit{J.
		Statist. Plann. Inference.} \textbf{143}(6), 1064-1081.
	
	\bibitem[Chernoff \textit{et al}.(1967)]{Chall67} Chernoff, H. Gastwirth
	J.L. andJohns, M.V. \ 1997. Asymptotic distribution of linear combinations
	of functions of order statistics with applications to estimation, \textit{%
		Annals of Mathematical Statistics}. \textbf{38}, 52-72.
	
	\bibitem[Cs\"{o}rg\H{o} \textit{et al}.(1986)]{CsCsHM86} Cs\"{o}rg\H{o}, M.,
	Cs\"{o}rg\H{o}, S., Horv\'{a}th, L. and Mason, D.M., 1986. Weighted
	empirical and quantile processes. \textit{Ann. Probab.} \textbf{14}(1),
	31-85.
	
	\bibitem[Cs\"{o}rg\H{o} \textit{et al.}(1985)]{Csorgoetal85} Cs\"{o}rg\H{o},
	S., Deheuvels, P. and Mason, D., 1985. Kernel estimates of the tail index of
	a distribution. \textit{Ann. Statist.} \textbf{13} (3), 1050-1077.
	
	\bibitem[Dekkers \textit{et al.}(1989)]{Dekeretal89} Dekkers, A.L.M.,
	Einmahl, J.H.J. and de Haan, L., 1989. A moment estimator for the index of
	an extreme value distribution. \textit{Ann. Statist.} \textbf{17} (4),
	1833-1855.
	
	\bibitem[de Haan and Ferreira(2006)]{deHF06} de Haan, L. and Ferreira, A.,
	2006. \textit{Extreme Value Theory: An Introduction.} Springer Series in
	Operations Research and Financial Engineering. Springer, New York.
	
	\bibitem[de Haan and Stadtm\"{u}ller(1996)]{deHS96} de Haan, L. and Stadtm%
	\"{u}ller, U., 1996. Generalized regular variation of second order. J.
	Australian Math. Soc. (Series A) \textbf{61}, 381-395.
	
	\bibitem[Drees(1995)]{Drees95} Drees, H., 1995. Refined Pickands estimators
	of the extreme value index. \textit{Ann. Statist.} \textbf{23}(6), 2059-2080.
	
	\bibitem[Fabi\'{a}n(2001)]{Fab01} Fabi\'{a}n, Z., 2001. Induced cores and
	their use in robust parametric estimation, \textit{Communication in
		Statistics, Theory Methods}. \textbf{30}, 537-556.
	
	\bibitem[Hill(1975)]{Hill75} Hill, B.M., 1975. A simple general approach to
	inference about the tail of a distribution. \textit{Ann. Statist}. \textbf{3}%
	(5), 1163-1174.
	
	\bibitem[Jones and Zitikis(2003)]{JoZi03} Jones, B.L., Zitikis, R., 2003.
	Empirical estimation of risk measures and related quantities. \textit{North
		American Actuarial Journal}. 7, 44-54.
	
	\bibitem[J\'{u}arez and Schucany(2004)]{JuSc04} J\'{u}arez, S.F. and
	Schucany, W.R., 2004. Robust and efficient estimation for the generalized
	Pareto distribution. \textit{Extremes}, \textbf{7}, 237-251.
	
	\bibitem[Kim and Lee(2008)]{KiLe08} Kim, M. and Lee, S., 2008. Estimation of
	a tail index based on minimum density power divergence. \textit{Journal of
		Multivariate Analysis}, \textbf{99}, 2453-2471.
	
	\bibitem[Peng and Qi(2004)]{PenQui04} Peng, L., Qi, Y., 2004. Estimating the
	first- and second-order parameters of a heavy-tailed distribution. \textit{%
		Aust. N. Z. J. Stat.} \textbf{46}(2), 305-312.
	
	\bibitem[Peng and Welsh(2001)]{PeWe01} Peng, L. and Welsh, A.H., 2001.
	Robust estimation of the generalized Pareto distribution. \textit{Extremes}, 
	\textbf{4}, 53-65.
	
	\bibitem[Pickands(1975)]{Pick75} Pickands, J., 1975. Statistical inference
	using extreme order statistics. \textit{Ann. Statist.} \textbf{3}, 119-131.
	
	\bibitem[Mason(1981)]{Mason81} Mason, D.M., 1981. Asymptotic normality of
	linear combinations of order statistics with a smooth score function, 
	\textit{Annals of Statistics}. \textbf{9}, 899-908.
	
	\bibitem[Mason(1982)]{Mas82} Mason, D.M., 1982. Laws of large numbers for
	sums of extreme values. \textit{Ann. Probab.} \textbf{10}, 756-764.
	
	\bibitem[Matthys and Beirlant(2003)]{MaBe03} Matthys, G. and Beirlant, J.,
	2003. Estimating the extreme value index and high quantiles with exponential
	regression models. \textit{Statist. Sinica}. \textbf{13}(3), 853-880.
	
	\bibitem[Necir \textit{et al}.(2007)]{NeMeMe07} Necir, A., Meraghni, D. and
	Meddi, F., 2007. Statistical estimate of the proportional hazard premium of
	loss. \textit{Scand. Actuar. J.} \textbf{3}, 147-161.
	
	\bibitem[Necir and Meraghni(2009)]{NeMe09} Necir, A., Meraghni, D., 2009.
	Empirical estimation of the proportional hazard premium for heavy-tailed
	claim amounts. \textit{Insurance Math. Econom}. \textbf{45}, 49-58.
	
	\bibitem[Reiss and Thomas(2007)]{ReTo07} Reiss, R.-D., Thomas, M., 2007.
	Statistical Analysis of Extreme Values with Applications to Insurance,
	Finance, Hydrology and Other Fields, 3rd ed. Birkh\"{a}user Verlag, Basel,
	Boston, Berlin.
	
	\bibitem[Rolski \textit{et al.}(1999)]{Roletal99} Rolski, T., Schimidli, H.,
	Schmidt, V. and Teugels, J.L., 1999. \textit{Stochastic Processes for
		Insurance and Finance}. John Wiley \& Sons, Chichester.
	
	\bibitem[Shorack and Wellner(1986)]{SW86} Shorack, G., R. and Wellner, J.,
	A., 1986. \textit{Empirical Processes with Applications to Statistics.}
	Wiley.
	
	\bibitem[Stehl\'{\i}k \textit{et al}.(2012)]{StFaS12} Stehl\'{\i}k, M., Fabi
	\'{a}n, Z. and St\v{r}elec, L. 2012. Small sample robust testing for
	normality against Pareto tails. \textit{Comm. Statist. Simulation Comput}. 
	\textbf{41}(7), 1167-1194.
	
	\bibitem[Stigler(1974)]{Stigler74} Stigler, S. 1974. Linear functions of
	order statistics with smooth weight functions. \textit{Annals of Statistics.}
	\textbf{2}, 676-693.
	
	\bibitem[Vandewalle et al.(2007)]{VaBeChHu07} Vandewalle, B., Beirlant, J.,
	Christmann, A. and Hubert, M., 2007. A robust estimator for the tail index
	of Pareto-typ e distributions . Computational Statistics \& Data Analysis, 
	\textbf{51}, 6252-6268.
	
	\bibitem[Wang(1996)]{Wang96} Wang, S. S., 1996. Premium calculation by
	transforming the layer premium density. \textit{ASTIN Bulletin.} \textbf{26}
	, 71-92.
	
	\bibitem[Weissman(1978)]{Weis78} Weissman, I., 1978. Estimation of
	parameters and large quantiles based on the $k$ largest observations.
	Journal of American Statistical Association \textbf{73}, 812-815.
	
	\bibitem[Wirch and Hardy(1999)]{WiHa99} Wirch, J. L., and Hardy, M. R.,
	1999. A synthesis of risk measures for capital adequacy. Insurance:
	Mathematics and Economics. \textbf{25}(3), 337-347.
\end{thebibliography}
\end{document}